\let\reftagform@=\tagform@
\def\tagform@#1{\maketag@@@{(\ignorespaces\textcolor{blue}{#1}\unskip\@@italiccorr)}}
\renewcommand{\eqref}[1]{\textup{\reftagform@{\ref{#1}}}}
\newtheorem{theorem}{Theorem}
\theoremstyle{plain}
\newtheorem{corollary}{Corollary}
\newtheorem{lemma}{Lemma}
\newtheorem{proposition}{Proposition}
\newtheorem{remark}{Remark}
\numberwithin{equation}{section}
\begin{document}
\title[On $q$-Bernoulli inequality]{On $q$-Bernoulli inequality}

\author[M.W. Alomari]{Mohammad W. Alomari}

\address{Department of Mathematics, Faculty of Science and
Information Technology, Irbid National University, 2600 Irbid
21110, Jordan.} \email{mwomath@gmail.com}
\date{\today}
\subjclass[2000]{05A20, 05A30}

\keywords{$q$-Bernoulli inequality, $q$-Calculus, Combinatorial
inequalities}

\begin{abstract}
In this work, the $q$--analogue of Bernoulli inequality is proved.
Some other related results are presented.
\end{abstract}

\maketitle
\section{Introduction}
Throughout this work, we consider $q\in(0,1)$. The $q$-number is
defined to be the number of the form
\begin{align*}
\left[ \alpha  \right]_q  = \frac{{q^\alpha   - 1}}{{q - 1}},
\qquad \text{for any } \alpha \in \mathbb{C}.
\end{align*}
In particular, if $\alpha =n\in \mathbb{N}$, then the positive
$q$-integer is defined to be
\begin{align*}
\left[n \right]_q  = \frac{{q^n   - 1}}{{q - 1}} =1+q+q^2+\cdots
q^{n-1}.
\end{align*}
In special case, we have $\left[1 \right]_q=1$ and $\left[0
\right]_q=\frac{1}{1-q}=\left[\infty \right]_q$.

We define the $q$-factorial of the number $[n]_q$ and the
$g$-binomial coefficient by
\begin{align*}
\left[ 0 \right]_q ! &= 1, \qquad \left[ n \right]_q ! = \left[ n
\right]_q \left[ {n - 1} \right]_q  \cdots \left[ 2 \right]_q
\cdot \left[ 1 \right]_q \qquad   \left[ {\begin{array}{*{20}c}
   n  \\
   j  \\
\end{array}} \right]_q  = \frac{{\left[ n \right]_q !}}{{\left[ j \right]_q !\left[ {n - j} \right]_q !}}
\end{align*}
with the convention that
\begin{align*}
\left[ {\begin{array}{*{20}c}
   0   \\
   0  \\
\end{array}} \right]_q&=1,\qquad \left[
{\begin{array}{*{20}c}
   0   \\
   j  \\
\end{array}} \right]_q=0, \,\,\forall j\ge1.
\end{align*}
The $q$-Pochammer symbol is defined to be
\begin{align*}
\left( {x - a} \right)_q^n =   \prod\limits_{j = 0}^{n - 1}
{\left( {x - q^j a} \right)},\,\,\,\text{with}\,\, \left( {x - a}
\right)_q^{(0)}=1,\,\,\,\text{and}\,\,  \left( {x - a} \right)_q^{
- n} = \frac{1}{{\left( {x - q^{ - n} a} \right)_q^n }}.
\end{align*}
This formula plays an important role in combinatorics. For
instance, for $x=1$ and $x=a$ this formula make sense as
$n=\infty$:
\begin{align*}
\left( {1 + x} \right)_q^n =   \prod\limits_{j = 0}^{\infty}
{\left( {1 + q^j x} \right)}.
\end{align*}
The above infinite product converges if $q\in
\left(0,\infty\right)$.

We adopt  the symbol
\begin{align*}
\left( {1 + x} \right)_q^\alpha   = \frac{{\left( {1 + x}
\right)_q^\infty }}{{\left( {1 + q^\alpha  x} \right)_q^\infty }}
\end{align*}
for any number $\alpha$. Clearly, this definition coincides with
definition of $\left( {1 + x} \right)_q^n $ when $\alpha=n\in
\mathbb{N}$.
\begin{lemma}\cite{K}\label{lem1}
 For any two numbers $\alpha$ and $\beta$, we have
\begin{align*}
\left( {1 + x} \right)_q^\alpha  =\frac{ \left( {1 + x}
\right)_q^{\alpha  + \beta }}{ \left( {1 + q^\alpha  x}
\right)_q^\beta  }
\end{align*}
and
\begin{align*}
D_q \left( {1 + x} \right)_q^\alpha   = \left[ \alpha  \right]_q
\left( {1 + qx} \right)_q^{\alpha  - 1}.
\end{align*}
\end{lemma}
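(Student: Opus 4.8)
The plan is to reduce both assertions to the defining relation $(1+x)_q^\alpha = (1+x)_q^\infty/(1+q^\alpha x)_q^\infty$ together with a single elementary observation: peeling the $j=0$ factor off the product $(1+x)_q^\infty = \prod_{j\ge 0}(1+q^j x)$ yields the shift identity $(1+x)_q^\infty = (1+x)\,(1+qx)_q^\infty$, and, more generally, $(1+q^\alpha x)_q^\infty = (1+q^\alpha x)\,(1+q^{\alpha+1}x)_q^\infty$. All products involved converge for $q\in(0,\infty)$, as already noted, so these rearrangements are legitimate.

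For the first identity I would simply expand the right-hand side by definition:
\[
(1+x)_q^{\alpha+\beta} = \frac{(1+x)_q^\infty}{(1+q^{\alpha+\beta}x)_q^\infty}, \qquad (1+q^\alpha x)_q^\beta = \frac{(1+q^\alpha x)_q^\infty}{(1+q^{\alpha+\beta}x)_q^\infty},
\]
so that in the quotient $(1+x)_q^{\alpha+\beta}/(1+q^\alpha x)_q^\beta$ the two factors $(1+q^{\alpha+\beta}x)_q^\infty$ cancel, leaving $(1+x)_q^\infty/(1+q^\alpha x)_q^\infty = (1+x)_q^\alpha$. This part presents no obstacle beyond recalling the definition.

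For the $q$-derivative formula I would first record two consequences of the shift identity: dividing the definitions of $(1+x)_q^\alpha$ and of $(1+qx)_q^{\alpha-1}$ gives $(1+x)_q^\alpha = (1+x)\,(1+qx)_q^{\alpha-1}$, and dividing the definitions of $(1+qx)_q^\alpha$ and $(1+qx)_q^{\alpha-1}$ (using the shifted product identity above) gives $(1+qx)_q^\alpha = (1+q^\alpha x)\,(1+qx)_q^{\alpha-1}$. Then, with the Jackson derivative $D_q f(x) = \big(f(qx)-f(x)\big)/\big((q-1)x\big)$, I compute
\[
D_q (1+x)_q^\alpha = \frac{(1+qx)_q^\alpha - (1+x)_q^\alpha}{(q-1)x} = \frac{\big[(1+q^\alpha x)-(1+x)\big](1+qx)_q^{\alpha-1}}{(q-1)x} = \frac{q^\alpha-1}{q-1}\,(1+qx)_q^{\alpha-1},
\]
where the factor $(1+qx)_q^{\alpha-1}$ has been pulled out and $x$ cancels between numerator and denominator; the remaining fraction is $[\alpha]_q$ by definition.

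The step I expect to require the most care — the ``hard part'', such as it is — is the bookkeeping with the infinite products when $\alpha$ is not a positive integer: one must be comfortable treating $(1+x)_q^\alpha$ as a genuine ratio of two convergent products and check that the reindexing $(1+q^\alpha x)_q^\infty = (1+q^\alpha x)(1+q^{\alpha+1}x)_q^\infty$ and the quotient identities it yields are mutually consistent. Once these are secured, both assertions are one-line computations; the formula $D_q(1+x)_q^\alpha=[\alpha]_q(1+qx)_q^{\alpha-1}$ is, in the end, just the telescoping of a single factor out of the product, in exact analogy with the classical identity $D_q x^n = [n]_q x^{n-1}$.
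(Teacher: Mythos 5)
Your proof is correct: both identities follow, exactly as you show, from the definition $(1+x)_q^\alpha=(1+x)_q^\infty/(1+q^\alpha x)_q^\infty$ together with the peeling identity $(1+y)_q^\infty=(1+y)(1+qy)_q^\infty$, and your telescoping computation of $D_q(1+x)_q^\alpha$ is the standard one. The paper itself gives no proof of this lemma (it is quoted from Kac--Cheung), and your argument is essentially the proof found in that reference, so there is nothing further to compare.
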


The $q$-derivative of any real valued function $f$ is defined to
be
\begin{align*}
D_q f\left( x \right) = \frac{{f\left( {qx} \right) - f\left( x
\right)}}{{\left( {q - 1} \right)x}}, \qquad x\ne0.
\end{align*}
Clearly, as $q\to 1^-$ then $D_q f\left( x \right)$ tends to
$f^{\prime}(x)$, provided that $f$ is differentiable.

Two fundamentals $q$-binomial formulas are well know in
Literature. The $q$-Gauss binomial which has the form
\begin{align*}
\left( {1 + x} \right)_q^n   = \sum\limits_{j = 0}^n {\left[
{\begin{array}{*{20}c}
   n   \\
   j  \\
\end{array}} \right]_q q^{j\left( {j - 1} \right)/2} x^j }
\end{align*}
and the  $q$-Heine's binomial formula
\begin{align*}
\frac{1}{\left( {1 - x} \right)_q^n}   = \sum\limits_{j =
0}^\infty {\left[ {\begin{array}{*{20}c}
   n  \\
   j  \\
\end{array}} \right]_q   x^j }.
\end{align*}
However, since
\begin{align*}
\mathop {\lim }\limits_{n \to \infty } \left[
{\begin{array}{*{20}c}
   n  \\
   j  \\
\end{array}} \right]_q  =
  \frac{1}{{\left( {1 - q} \right)\left( {1 - q^2 } \right) \ldots
\left( {1 - q^j } \right)}}
\end{align*}
Applying this for $q$-Gauss and $q$-Heine's binomial formulas, we
get two  formal power series in $x$. Namely, we have
\begin{align}
\left( {1 + x} \right)_q^\infty  = \sum\limits_{j = 0}^\infty {
q^{j\left( {j - 1} \right)/2} \frac{{x^j }}{{\left( {1 - q}
\right)\left( {1 - q^2 } \right) \ldots \left( {1 - q^j }
\right)}}  },\label{eq1.1}
\end{align}
and
\begin{align}
\frac{1}{\left( {1 - x} \right)_q^\infty}   = \sum\limits_{j =
0}^\infty { \frac{{x^j }}{{\left( {1 - q} \right)\left( {1 - q^2 }
\right) \ldots \left( {1 - q^j } \right)}} }.\label{eq1.2}
\end{align}
These two series are very useful in the theory of $q$-calculus,
since they were used to define the $q$-analogue of exponential
function. From \eqref{eq1.2}
\begin{align*}
\frac{1}{\left( {1 - x} \right)_q^\infty} =\sum\limits_{j = 0}^n
{\frac{{\left( {{\textstyle{x \over {1 - q}}}} \right)^j
}}{{\left( {{\textstyle{{1 - q} \over {1 - q}}}} \right)\left(
{{\textstyle{{1 - q^2 } \over {1 - q}}}} \right) \cdots \left(
{{\textstyle{{1 - q^j } \over {1 - q}}}} \right)}}} =
\sum\limits_{j = 0}^n {\frac{{\left( {{\textstyle{x \over {1 -
q}}}} \right)^j }}{{\left[ 1 \right]_1 \left[ 2 \right]_q  \cdots
\left[ j \right]_q }}}  = \sum\limits_{j = 0}^n {\frac{{\left(
{{\textstyle{x \over {1 - q}}}} \right)^j }}{{\left[ j \right]_q
!}} = {\rm{e}}_q^{{\textstyle{x \over {1 - q}}}} },
\end{align*}
or we write
\begin{align*}
{{\rm{e}}_q^x } =\frac{1}{\left( {1 - \left(1-q\right)x}
\right)_q^\infty}.
\end{align*}
Similarly, if we use \eqref{eq1.1} the companion $q$-exponential
function is defined to be
\begin{align*}
\left( {1 + x} \right)_q^\infty  =\sum\limits_{j = 0}^n
{\frac{{q^{j\left( {j - 1} \right)/2} \left( {{\textstyle{x \over
{1 - q}}}} \right)^j }}{{\left( {{\textstyle{{1 - q} \over {1 -
q}}}} \right)\left( {{\textstyle{{1 - q^2 } \over {1 - q}}}}
\right) \cdots \left( {{\textstyle{{1 - q^j } \over {1 - q}}}}
\right)}}}  = \sum\limits_{j = 0}^n {\frac{{q^{j\left( {j - 1}
\right)/2} \left( {{\textstyle{x \over {1 - q}}}} \right)^j
}}{{\left[ 1 \right]_1 \left[ 2 \right]_q  \cdots \left[ j
\right]_q }}}  = \sum\limits_{j = 0}^n {\frac{{\left(
{{\textstyle{x \over {1 - q}}}} \right)^j }}{{\left[ j \right]_q
!}} = {\rm{E}}_q^{{\textstyle{x \over {1 - q}}}} }
\end{align*}
or we write
\begin{align*}
{{\rm{E}}_q^x } = \left( {1 + \left(1-q\right)x} \right)_q^\infty.
\end{align*}
The derivatives of the  above two $q$-exponential functions are
given as
\begin{align*}  D_q{{\rm{E}}_q^x } ={{\rm{E}}_q^{qx} },\qquad
 D_q {{\rm{e}}_q^x }={{\rm{e}}_q^x }
\end{align*}
We note that the additive property of the exponentials does not
hold in general, i.e.,
\begin{align*}
{\rm{e}}_q^x {\rm{e}}_q^y  = {\rm{e}}_q^{x + y}.
\end{align*}
However, if $x$ and $y$ satisfy the commutation relation $yx=qxy$,
then the additive property holds.

The two functions ${\rm{E}}_q^x$ and ${\rm{e}}_q^x$  are connected
to each other by the relations
\begin{align*}
{\rm{E}}_q^{ - x} {\rm{e}}_q^x  = 1 ,\qquad  {\rm{e}}_{1/q}^x =
{\rm{E}}_q^x.
\end{align*}

Naturally, it is important to know the relation between these
$q$-quantities. One of the most  effective method  is to use
 inequalities. Among others, one of the most famous and
applicable inequalities used in mathematics is the Bernoulli
inequality, which is well known as:
\begin{align}
\left( {1 + x} \right)^n   \ge 1 + n x \label{eq1.3}
\end{align}
for every  $x>-1$ and every positive integer $n\ge1$. This was
extended  to more general form such as \cite{MPF}:
\begin{align}
\left( {1 + x} \right)^\alpha   \ge 1 +\alpha x, \qquad \alpha \ge
1\label{eq1.4}
\end{align}
and
\begin{align}
\left( {1 + x} \right)^\alpha   \le 1 + \alpha x, \qquad 0<\alpha
< 1.\label{eq1.5}
\end{align}
This inequality has important applications in proving some
classical theorems in Analysis and Statistics. Due to its
important role, in this work  we prove the $q$-analogue of
Bernoulli inequality and give some other related inequalities.

\section{ $q$-Bernoulli inequality}
Let us begin with the following version of $q$-Bernoulli
inequality for integers.
\begin{theorem}\label{thm1}
Let $q\in (0,1)$. If  $x>-1$ then the $q$-Bernoulli inequality
\begin{align}
\left( {1 + x} \right)_q^n   \ge 1 + \left[ n  \right]_q
x,\label{eq2.1}
\end{align}
is valid for every positive integer $n\ge1$.
\end{theorem}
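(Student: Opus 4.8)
The plan is to prove \eqref{eq2.1} by induction on $n$, using only the product representation $\left(1+x\right)_q^n=\prod_{j=0}^{n-1}\left(1+q^jx\right)$ together with the one-step recursion
\begin{align*}
\left(1+x\right)_q^{n+1}=\left(1+x\right)_q^n\left(1+q^nx\right),
\end{align*}
which is immediate from that representation (the extra factor is the $j=n$ term). The base case $n=1$ is just $\left(1+x\right)_q^1=1+x=1+\left[1\right]_qx$, so \eqref{eq2.1} holds there with equality.

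For the inductive step, assume $\left(1+x\right)_q^n\ge 1+\left[n\right]_qx$ for all $x>-1$. The point that needs a moment's care — and it is essentially the only subtlety — is the sign of the multiplier $1+q^nx$: since $q\in(0,1)$ and $x>-1$, multiplying $x>-1$ by $q^n>0$ gives $q^nx>-q^n>-1$, so $1+q^nx>0$. This positivity is exactly what lets us multiply the induction hypothesis through by $1+q^nx$ without reversing the inequality, and combined with the recursion it gives
\begin{align*}
\left(1+x\right)_q^{n+1}=\left(1+x\right)_q^n\left(1+q^nx\right)\ge\left(1+\left[n\right]_qx\right)\left(1+q^nx\right).
\end{align*}

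It then remains to expand the right-hand side and invoke the telescoping identity $\left[n\right]_q+q^n=\left[n+1\right]_q$ for $q$-integers:
\begin{align*}
\left(1+\left[n\right]_qx\right)\left(1+q^nx\right)=1+\left(\left[n\right]_q+q^n\right)x+\left[n\right]_qq^nx^2=1+\left[n+1\right]_qx+\left[n\right]_qq^nx^2\ge 1+\left[n+1\right]_qx,
\end{align*}
where the last step uses $\left[n\right]_qq^nx^2\ge0$. Chaining the last two displays yields $\left(1+x\right)_q^{n+1}\ge 1+\left[n+1\right]_qx$, closing the induction. I do not anticipate a genuine obstacle here: the whole argument rests on keeping $1+q^nx$ positive (which is precisely where the hypotheses $q\in(0,1)$ and $x>-1$ enter), on the identity $\left[n\right]_q+q^n=\left[n+1\right]_q$, and on discarding a nonnegative square term. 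An alternative, more analytic route via the $q$-derivative formula in Lemma \ref{lem1} is possible, but for integer exponents the induction above is the shortest.
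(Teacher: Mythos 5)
Your proof is correct and follows essentially the same route as the paper: induction on $n$ using the recursion $\left(1+x\right)_q^{n+1}=\left(1+x\right)_q^n\left(1+q^nx\right)$, multiplying the inductive hypothesis through by the positive factor $1+q^nx$, discarding the nonnegative term $q^n\left[n\right]_qx^2$, and invoking a $q$-integer identity. Your version is in fact slightly tidier, since you verify explicitly that $1+q^nx>0$ (which the paper leaves implicit) and use $\left[n\right]_q+q^n=\left[n+1\right]_q$ directly instead of the paper's detour through $\left[k+1\right]_q=q\left[k\right]_q+1$ and division by $q$.
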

\begin{proof}
Our proof carries by Induction. Define the statement
\begin{align}
{\rm{P}}\left(n\right): \qquad\left( {1 + x} \right)_q^n   \ge 1 +
\left[ n \right]_q x.\label{eq2.2}
\end{align}
If $x=0$, then the we get equality for all $n$ and thus
\eqref{eq2.1} holds.

If   $x>-1$. Then,
\begin{align*}
{\rm{P}}\left(1\right): \qquad\left( {1 + x} \right)^1_q=  \left(
{1 + x} \right) \ge  1 +
 x=1 + \left[ 1 \right]_q x
\end{align*}
Assume \eqref{eq2.2} holds for $n=k$, i.e.,
\begin{align*}
{\rm{P}}\left(k\right): \qquad\left( {1 + x} \right)_q^k   \ge 1 +
\left[ k \right]_q x  \qquad\text{is true}.
\end{align*}
We need to show that
\begin{align*}
{\rm{P}}\left(k+1\right): \qquad\left( {1 + x} \right)_q^{k+1} \ge
1 + \left[ k+1 \right]_q x
\end{align*}
is true?.

Starting with the left-hand side
\begin{align*}
\left( {1 + x} \right)_q^{k+1}&=\left( {1 + x} \right)_q^k\left(
{1 + q^kx} \right) \\&\ge \left(1 + \left[ k \right]_q
x\right)\left( {1 + q^kx} \right)\qquad \qquad\qquad\text{(follows
by assumption \eqref{eq2.2} for $n=k$)}
\\
&=1 + \left[ k \right]_q x + q^k x + q^k \left[ k \right]_q x^2  \\
  &\ge 1 + \frac{1}{{q  }}q  \left[ k \right]_q x + \frac{q}{q}q^k x    \\
  &= 1 + \frac{1}{{q  }}\left( {\left[ {k + 1} \right]_q  - 1} \right)x + \frac{1}{q}\left( {q^{k + 1} x} \right) \qquad \qquad\text{(since $
\left[ {k + 1} \right]_q  = q  \left[ k \right]_q  + 1$)}   \\
  &= 1 + \frac{1}{q}\left( {\left[ {k + 1} \right]_q  + q^{k + 1}  - 1} \right)x \\
  &= 1 + \frac{1}{q}\left( {\left[ {k + 1} \right]_q  + \left( {q - 1} \right)\left[ {k + 1} \right]_q } \right)x\qquad\qquad \text{(since $
q^{k + 1}  - 1 = \left( {q - 1} \right)\left[ {k + 1} \right]_q$)} \\
  &= 1 + \left[ {k + 1} \right]_q x,
\end{align*}
which means the statement ${\rm{P}}\left(k+1\right)$ is true and
thus by Mathematical
 Induction hypothesis the inequality in \eqref{eq2.1} holds for every $n\in
 \mathbb{N}$ and $x>-1$.
\end{proof}

\begin{remark}
\label{rem1}As $q\to 1$ in \eqref{eq2.1}, then the $q$-Bernoulli
inequality \eqref{eq2.1} reduces to the original version of
Bernoulli inequality \eqref{eq1.3} for integer case.
\end{remark}
\begin{remark}
For the case $-1<x<0$, we prefer to write \eqref{eq2.1}  in the
form
\begin{align*} \left( {1 - y} \right)_q^n \ge 1 -
\left[n\right]_qy
\end{align*}
 for every $0< y<1$ and $n \ge 1$.\label{rem1}
\end{remark}

\begin{corollary}\label{cor1}
Let $q\in (0,1)$. If  $x>-1$, then the generalization
$q$-Bernoulli inequality
\begin{align*}
\left( {1 + x} \right)_q^{m + n}   \ge \left(1 + \left[ m
\right]_q x\right) \left( {1 + q^m x} \right)_q^n,
\end{align*}
is valid for every   $m\in \mathbb{N}$ and $n\in \mathbb{Z}$.
\end{corollary}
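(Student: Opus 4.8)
The plan is to deduce the statement almost immediately from Lemma~\ref{lem1} and Theorem~\ref{thm1}, the only genuine work being a sign check. Taking $\alpha=m$ and $\beta=n$ in the first identity of Lemma~\ref{lem1} gives the factorization
\[
\left( {1 + x} \right)_q^{m+n} = \left( {1 + x} \right)_q^{m}\left( {1 + q^{m}x} \right)_q^{n}.
\]
By Theorem~\ref{thm1}, since $m\in\mathbb{N}$ and $x>-1$, the first factor satisfies $\left( {1 + x} \right)_q^{m}\ge 1+[m]_q x$. Multiplying this through by $\left( {1 + q^{m}x} \right)_q^{n}$ — multiplication by a nonnegative factor preserving $\ge$ irrespective of the sign of $1+[m]_q x$ — yields exactly the claimed bound. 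Hence everything reduces to one point: establishing $\left( {1 + q^{m}x} \right)_q^{n}\ge 0$ whenever $x>-1$.

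For $n\ge 0$ this is routine: $\left( {1 + q^{m}x} \right)_q^{n}=\prod_{j=0}^{n-1}\bigl(1+q^{m+j}x\bigr)$, and each factor is positive, since $1+q^{m+j}x\ge 1$ when $x\ge 0$, and $q^{m+j}x>x>-1$ (because $0<q^{m+j}<1$) when $-1<x<0$.

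The main obstacle is the case $n<0$. Writing $n=-k$ with $k\ge 1$, the definition of the negative $q$-Pochhammer power gives
\[
\left( {1 + q^{m}x} \right)_q^{-k}=\frac{1}{\left( {1 + q^{m-k}x} \right)_q^{k}}=\frac{1}{\prod_{j=0}^{k-1}\bigl(1+q^{m-k+j}x\bigr)},
\]
where the exponents $m-k+j$ range from $m-k$ to $m-1$. If $k\le m$ they are all $\ge 0$ and the positivity argument of the previous paragraph applies unchanged, so the reduction goes through; but if $k>m$ some exponents are negative, $q^{m-k+j}>1$, and then a factor $1+q^{m-k+j}x$ can vanish or become negative for $x$ close to $-1$, so neither the sign of $\left( {1 + q^{m}x} \right)_q^{n}$ nor the direction of the inequality is controlled. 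I would therefore carry out the proof in two stages — the one-line reduction via Lemma~\ref{lem1} and Theorem~\ref{thm1}, followed by the sign analysis of $\left( {1 + q^{m}x} \right)_q^{n}$ — and I expect the regime $m+n<0$ (equivalently $k>m$) to require an additional hypothesis, such as a restriction on $x$ forcing every $1+q^{m-k+j}x$ to be positive, rather than being true as literally stated.
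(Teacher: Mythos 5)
Your argument is the paper's own proof made honest: the published proof simply substitutes the identity $\left(1+x\right)_q^{m}=\left(1+x\right)_q^{m+n}/\left(1+q^{m}x\right)_q^{n}$ from Lemma \ref{lem1} into \eqref{eq2.1} and clears the denominator, without ever addressing the sign of $\left(1+q^{m}x\right)_q^{n}$ --- exactly the point you isolate. Your sign analysis is correct, and your suspicion about the regime $m+n<0$ is justified: there the corollary is false as stated, not merely harder to prove. Take $q=\tfrac12$, $m=2$, $n=-3$, $x=-0.9$; then
\begin{align*}
\left(1+x\right)_q^{m+n}=\left(1+x\right)_q^{-1}=\frac{1}{1+q^{-1}x}=\frac{1}{-0.8}=-1.25,
\end{align*}
while
\begin{align*}
\bigl(1+\left[2\right]_q x\bigr)\left(1+q^{2}x\right)_q^{-3}
=\frac{1+\tfrac32 x}{\left(1+q^{-1}x\right)\left(1+x\right)\left(1+qx\right)}
=\frac{-0.35}{(-0.8)(0.1)(0.55)}\approx 7.95,
\end{align*}
so the claimed inequality fails. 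Your two-stage proof therefore establishes the corollary precisely where it is true, namely for $m+n\ge 0$: all exponents $m+n,\dots,m-1$ are then nonnegative, so $\left(1+q^{m}x\right)_q^{n}>0$ for every $x>-1$ and the multiplication step is legitimate. For $m+n<0$ the additional hypothesis you anticipate is genuinely needed; for instance requiring $1+q^{m+n}x>0$, i.e.\ $x>-q^{-(m+n)}$, restores positivity of every factor (it is the binding constraint among the negative exponents) and the same argument then goes through.
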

\begin{proof}
The result  is an immediate consequence of Theorem \ref{thm1}, by
substituting  $\left( {1 + x} \right)_q^m = \frac{{\left( {1 + x}
\right)_q^{m + n} }}{{\left( {1 + q^m x} \right)_q^n }}$ in
\eqref{eq2.1}. So that the result follows for every $m\in
\mathbb{N}$ and $n\in \mathbb{Z}$.
\end{proof}

The following generalization of \eqref{eq2.1} is valid for any
real number $\alpha\ge0$.
\begin{theorem}\label{thm2}
Let $q\in (0,1)$. If  $x\ge0$ then the $q$-Bernoulli inequality
\begin{align}
\left( {1 + x} \right)_q^\alpha   \ge 1 + \left[ \alpha  \right]_q
x, \qquad \alpha \ge 1 \label{eq2.3}
\end{align}
and
\begin{align}
\left( {1 + x} \right)_q^\alpha   \le 1 + \left[ \alpha  \right]_q
x, \qquad 0<\alpha < 1 \label{eq2.4}
\end{align}
is valid.
\end{theorem}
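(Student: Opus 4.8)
The plan is to run a $q$-analogue of the standard calculus proof of \eqref{eq1.4}--\eqref{eq1.5}: introduce the auxiliary function
\begin{align*}
f(x) := \left(1+x\right)_q^\alpha - 1 - \left[\alpha\right]_q x, \qquad x\ge 0,
\end{align*}
and show that $f\ge 0$ on $[0,\infty)$ when $\alpha\ge 1$ and $f\le 0$ on $[0,\infty)$ when $0<\alpha<1$. First I would record that $f(0)=0$, since $\left(1+0\right)_q^\infty=1$ forces $\left(1+0\right)_q^\alpha=1$, and that $f$ is continuous at $0$: both $\left(1+x\right)_q^\infty$ and $\left(1+q^\alpha x\right)_q^\infty$ are continuous in $x$ as infinite products of continuous functions converging uniformly on compact subsets of $[0,\infty)$, and the denominator $\left(1+q^\alpha x\right)_q^\infty$ stays positive there. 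Next, by Lemma \ref{lem1} together with the linearity of $D_q$ and $D_q x=1$,
\begin{align*}
D_q f(x) = \left[\alpha\right]_q\left(1+qx\right)_q^{\alpha-1}-\left[\alpha\right]_q = \left[\alpha\right]_q\left(\left(1+qx\right)_q^{\alpha-1}-1\right),
\end{align*}
and $\left[\alpha\right]_q>0$ because $q\in(0,1)$ and $\alpha>0$.

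The second step is to fix the sign of $\left(1+qx\right)_q^{\alpha-1}-1$ for $x\ge 0$. Using the definition of the fractional $q$-power,
\begin{align*}
\left(1+qx\right)_q^{\alpha-1} = \frac{\left(1+qx\right)_q^\infty}{\left(1+q^\alpha x\right)_q^\infty} = \prod_{j=0}^{\infty}\frac{1+q^{j+1}x}{1+q^{j+\alpha}x}.
\end{align*}
Since $t\mapsto q^{t}$ is decreasing, $q^{j+\alpha}\le q^{j+1}$ when $\alpha\ge 1$, so for $x\ge 0$ every factor is $\ge 1$ and hence $\left(1+qx\right)_q^{\alpha-1}\ge 1$; likewise $q^{j+\alpha}\ge q^{j+1}$ when $0<\alpha<1$, so every factor is $\le 1$ and $\left(1+qx\right)_q^{\alpha-1}\le 1$. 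Combined with the formula for $D_q f$, this gives $D_q f(x)\ge 0$ on $(0,\infty)$ when $\alpha\ge 1$ and $D_q f(x)\le 0$ on $(0,\infty)$ when $0<\alpha<1$.

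Finally I would turn the sign of the $q$-derivative into a statement about $f$ itself. Because $D_q f(x)=\dfrac{f(qx)-f(x)}{(q-1)x}$ with $(q-1)x<0$ for $x>0$, the inequality $D_q f(x)\ge 0$ is equivalent to $f(qx)\le f(x)$, and $D_q f(x)\le 0$ is equivalent to $f(qx)\ge f(x)$. Iterating, for each fixed $x>0$ we obtain $f(q^{n}x)\le f(x)$ for all $n\ge 1$ when $\alpha\ge 1$, and $f(q^{n}x)\ge f(x)$ for all $n\ge 1$ when $0<\alpha<1$. Letting $n\to\infty$ we have $q^{n}x\to 0$, so continuity at $0$ gives $f(q^{n}x)\to f(0)=0$; hence $f(x)\ge 0$ in the first case and $f(x)\le 0$ in the second, while $x=0$ is the equality case. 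This establishes \eqref{eq2.3} and \eqref{eq2.4}. The one delicate point is precisely this last passage — deducing from a one-signed $q$-derivative on $[0,\infty)$ that $f$ stays on one side of $f(0)$ — which is why the continuity of $\left(1+x\right)_q^\alpha$ at the origin and the hypothesis $x\ge 0$ (needed so that the factors $1+q^{j+\alpha}x$ are monotone in the exponent) are both essential; for $x<0$ the factor comparison reverses and the argument breaks, consistent with the stated hypothesis.
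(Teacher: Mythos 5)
Your proposal is correct, and its skeleton matches the paper's: both define $f(x)=\left(1+x\right)_q^\alpha-1-\left[\alpha\right]_q x$, compute $D_q f(x)=\left[\alpha\right]_q\bigl(\left(1+qx\right)_q^{\alpha-1}-1\bigr)$ via Lemma \ref{lem1}, and deduce the inequality from the sign of $D_q f$ on $[0,\infty)$. The differences are in the two key steps. For the sign of $\left(1+qx\right)_q^{\alpha-1}-1$, the paper expands $\left(1+qx\right)_q^\infty$ and $\left(1+q^\alpha x\right)_q^\infty$ by the $q$-binomial series \eqref{eq1.1} and compares coefficients, producing the terms $q^{j}-q^{j\alpha}$ whose sign depends on $\alpha\gtrless 1$; you instead compare the two infinite products factor by factor, $\prod_{j\ge 0}\frac{1+q^{j+1}x}{1+q^{j+\alpha}x}$, which is more elementary and sidesteps any series manipulation, at the mild cost of having to note convergence and positivity of the products for $x\ge 0$ (which you do). For the final passage, the paper simply cites Gauchman's notion of $q$-increasing functions ($D_q f\ge 0$ iff $f(qx)\le f(x)$) and asserts the conclusion, whereas you make this step explicit: iterate $f(q^{n}x)\le f(x)$, let $n\to\infty$, and use continuity at $0$ together with $f(0)=0$. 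This is exactly the "delicate point" you flag, and spelling it out closes a gap the paper leaves implicit; so your route is a legitimate, slightly more self-contained variant of the same argument.
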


\begin{proof}
Let us recall that \cite{G}, for $0<a<b$  (or $0>a>b$), a function
$f(x)$ is said to be $q$-increasing (respectively, $q$-decreasing)
on $[a,b]$, if $f(qx)\le f(x)$ (respectively, $f(qx)\ge f(x)$)
whenever, $x\in [a,b]$ and $qx \in [a,b]$. As a direct consequence
we have, $f(x)$ is $q$-increasing (respectively, $q$-decreasing)
on $[a,b]$ iff $D_qf(x)\ge 0$ (respectively, $D_qf(x)\le 0$),
whenever, $x\in [a,b]$ and $qx \in [a,b]$.

Let $f\left( x \right) = \left( {1 + x} \right)_q^\alpha   -
\left[ \alpha  \right]_q x - 1$, $x\ge0$. Since $ \left( {1 + x}
\right)_q^\alpha   = \frac{{\left( {1 + x} \right)_q^\infty
}}{{\left( {1 + q^\alpha  x} \right)_q^\infty }}$, inserting $qx$
instead of $x$ and replace $\alpha$ by $\alpha-1$ we get $ \left(
{1 + qx} \right)_q^{\alpha  - 1}  = \frac{{\left( {1 + qx}
\right)_q^\infty  }}{{\left( {1 + q^\alpha  x} \right)_q^\infty }}
$. Therefore, we have
\begin{align*}
D_q f\left( x \right)  &= \left[ \alpha  \right]_q \left( {1 + qx}
\right)_q^{\alpha  - 1}  - \left[ \alpha  \right]_q\\
&= \left[ \alpha  \right]_q \frac{{\left( {1 + qx} \right)_q^\infty  }}{{\left( {1 + q^\alpha  x} \right)_q^\infty  }} - \left[ \alpha  \right]_q\\
  &= \left[ \alpha  \right]_q \frac{{\sum\limits_{j = 0}^\infty  {{\textstyle{{q^{j\left( {j - 1} \right)/2} } \over {\prod\limits_{k = 1}^j {\left( {1 - q^k } \right)} }}}q^j x^j } }}{{\sum\limits_{j = 0}^\infty  {{\textstyle{{q^{j\left( {j - 1} \right)/2} } \over {\prod\limits_{k = 1}^j {\left( {1 - q^k } \right)} }}}q^{j\alpha } x^j } }} - \left[ \alpha
  \right]_q\qquad (\text{with the convention} \prod\limits_{k = 1}^0 {\left( {1 - q^j } \right)}  = 1) \\
  &= \left[ \alpha  \right]_q \frac{{1 + \sum\limits_{j = 1}^\infty  {{\textstyle{{q^{j\left( {j - 1} \right)/2} } \over {\prod\limits_{k = 1}^j {\left( {1 - q^k } \right)} }}}q^j x^j } }}{{1 + \sum\limits_{j = 1}^\infty  {{\textstyle{{q^{j\left( {j - 1} \right)/2} } \over {\prod\limits_{k = 1}^j {\left( {1 - q^k } \right)} }}}q^{j\alpha } x^j } }} - \left[ \alpha
  \right]_q\\
 &= \left[ \alpha  \right]_q \frac{{\sum\limits_{j = 1}^\infty
{{\textstyle{{q^{j\left( {j - 1} \right)/2} } \over
{\prod\limits_{k = 1}^j {\left( {1 - q^k } \right)} }}}\left( {q^j
- q^{j\alpha } } \right)x^j } }}{{1 + \sum\limits_{j = 1}^\infty
{{\textstyle{{q^{j\left( {j - 1} \right)/2} } \over
{\prod\limits_{k = 1}^j {\left( {1 - q^k } \right)} }}}q^{j\left(
{\alpha  - 1} \right)} x^j } }}
   \ge 0,
\end{align*}
since $q\in (0,1)$ and  $\alpha \ge 1$ then $\left( {q^j  -
q^{j\alpha } } \right)>0$,
 and this implies that $D_q f\left( x \right) \ge 0$ for
all $x\ge0$, which means that $f$ is $q$-increasing and thus the
inequality \eqref{eq2.3} is proved.

The inequality \eqref{eq2.4} is deduced from the above proof by
noting that   $\left( {q^j  - q^{j\alpha } } \right)<0$ for all
$0<\alpha< 1$.
\end{proof}

\begin{remark}\label{rem2}
Setting $\alpha =n\in \mathbb{N}$ in \eqref{eq2.3}, then  the
inequality \eqref{eq2.3} reduces to the  $q$-version of Bernoulli
inequality \eqref{eq2.1} for integer case but for $x\ge0$.
Moreover, as $q\to 1$ \eqref{eq2.3} and \eqref{eq2.4} reduces to
the classical versions \eqref{eq1.4} and \eqref{eq1.5};
respectively.
\end{remark}

Testing the validity of \eqref{eq2.3} and \eqref{eq2.4} for
$-1<x<0$ arbitrarily, we find that these inequalities
 can be extended but with additional restriction on
$q\in (0,1)$, as given in the following result.
\begin{theorem}
\label{thm3}There exists $\widehat{q}\in (0,1)$ such that the
inequalities \eqref{eq2.3} and \eqref{eq2.4} are hold for every
$q\in (\widehat{q},1)$ and every $x>-1$.
\end{theorem}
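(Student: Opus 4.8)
The plan is to prove something slightly stronger, from which Theorem~\ref{thm3} follows with $\widehat q$ equal to \emph{any} number in $(0,1)$: namely that \eqref{eq2.3} and \eqref{eq2.4} already hold for every $q\in(0,1)$ and every $x\in(-1,0)$ (the range $x\ge0$ being covered by Theorem~\ref{thm2}). The tool is again the $q$-derivative of $f(x)=(1+x)_q^\alpha-[\alpha]_q x-1$, but since $(-1,0)$ is an interval of negative numbers — where the sign convention in the $q$-monotonicity criterion must be handled with care — I would work directly with the identity $D_qf(x)=\frac{f(qx)-f(x)}{(q-1)x}$ rather than quoting that criterion. First one checks that for $x\in(-1,0)$ and $\alpha>0$ the number $(1+x)_q^\alpha=\frac{\prod_{j\ge0}(1+q^jx)}{\prod_{j\ge0}(1+q^{j+\alpha}x)}$ is well defined and positive, each factor lying in $(0,1)$ because $q^j|x|<1$ and $q^{j+\alpha}|x|\le q^{\alpha}|x|<1$; moreover $x\mapsto(1+x)_q^\alpha$ is continuous at $0$ with value $1$ (the two products converge uniformly near $x=0$), so $f$ is continuous on $(-1,\infty)$ and $f(0)=0$.

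Next I would compute, via Lemma~\ref{lem1} exactly as in the proof of Theorem~\ref{thm2}, that
\[
D_qf(x)=[\alpha]_q\Big((1+qx)_q^{\alpha-1}-1\Big),\qquad (1+qx)_q^{\alpha-1}=\prod_{j\ge0}\frac{1+q^{j+1}x}{1+q^{j+\alpha}x},
\]
and determine the sign of each factor on $(-1,0)$. All numerators and denominators are positive there, and since $x<0$ the comparison $q^{j+\alpha}\le q^{j+1}$ for $\alpha\ge1$ (resp. $q^{j+\alpha}\ge q^{j+1}$ for $0<\alpha<1$) yields $1+q^{j+\alpha}x\ge 1+q^{j+1}x$ (resp. $\le$), so every factor is $\le1$ when $\alpha\ge1$ and $\ge1$ when $0<\alpha<1$. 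Because $[\alpha]_q>0$ for $\alpha>0$, this gives $D_qf(x)\le0$ on $(-1,0)$ when $\alpha\ge1$ and $D_qf(x)\ge0$ on $(-1,0)$ when $0<\alpha<1$.

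Finally, since $(q-1)x>0$ for $x\in(-1,0)$, the sign of $D_qf$ is exactly the sign of $f(qx)-f(x)$: thus $f(qx)\le f(x)$ for all $x\in(-1,0)$ when $\alpha\ge1$, and $f(qx)\ge f(x)$ there when $0<\alpha<1$. Iterating along the orbit $x,qx,q^2x,\dots$, all of whose points lie in $(-1,0)$, produces a monotone sequence $\{f(q^nx)\}_{n\ge0}$; by the continuity established above, $f(q^nx)\to f(0)=0$, whence $f(x)\ge0$ for $\alpha\ge1$ (it is the first term of a decreasing sequence with limit $0$) and $f(x)\le0$ for $0<\alpha<1$. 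This is precisely \eqref{eq2.3}, resp. \eqref{eq2.4}, on $(-1,0)$, and together with Theorem~\ref{thm2} it proves Theorem~\ref{thm3}. The part I expect to require the most care is not any single estimate but getting the direction right: the naive reflex of feeding $D_qf\ge0$ into the stated $q$-monotonicity criterion points the wrong way on an interval of negative arguments, so the argument must be organized around the raw difference quotient together with the telescoping limit at $x=0$; once that is set up, the conclusion — including the fact that no lower bound on $q$ is actually needed — drops out.
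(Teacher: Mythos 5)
Your proof is correct, but it follows a genuinely different route from the paper. The paper handles $x\ge 0$ and $-1<x<0$ by invoking the $q$-Mean Value Theorem of Rajkovi\'{c}--Stankovi\'{c}--Marinkovi\'{c} \eqref{q-MVT}; it is precisely that theorem which produces the unspecified threshold $\widehat q$, and the resulting $\widehat q$ in principle depends on the interval $[0,x]$ (hence on $x$ and $\alpha$), so some uniformization would still be needed to match the statement ``for every $x>-1$.'' You avoid the $q$-MVT altogether: using Lemma \ref{lem1} you compute $D_qf(x)=[\alpha]_q\bigl((1+qx)_q^{\alpha-1}-1\bigr)$ for $f(x)=(1+x)_q^\alpha-[\alpha]_qx-1$, read off the sign of each factor of $\prod_{j\ge0}\frac{1+q^{j+1}x}{1+q^{j+\alpha}x}$ on $(-1,0)$, and then --- correctly noting that on a negative interval $(q-1)x>0$, so the sign of $D_qf$ equals the sign of $f(qx)-f(x)$, which is the opposite of what a naive application of the quoted $q$-monotonicity criterion would suggest --- iterate along the orbit $x,qx,q^2x,\dots\to 0$ and use continuity of $f$ at $0$ with $f(0)=0$. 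This yields \eqref{eq2.3} and \eqref{eq2.4} for every $q\in(0,1)$ and every $x\in(-1,0)$, which combined with Theorem \ref{thm2} gives Theorem \ref{thm3} with \emph{any} $\widehat q\in(0,1)$, a strictly stronger conclusion than the paper's (and one indicating that the restriction to $q$ near $1$, suggested by the paper's numerical testing, is in fact unnecessary). Your orbit-plus-continuity step also supplies explicitly the passage from ``$q$-monotone'' to ``$f(x)\ge f(0)$'' that the paper's proof of Theorem \ref{thm2} leaves implicit; the only ingredients you should state as used are the convergence and positivity of the infinite products on $(-1,0)$ and the continuity of $(1+x)_q^\alpha$ at $x=0$, both of which you do address.
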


\begin{proof}
Firstly, we need to recall the $q$-Mean Value Theorem ($q$-MVT)
given in \cite{RSM}, it states that:  For a continuous function
$g$ defined on $[a,b]$ $(0<a<b)$, there exist $\eta\in (a,b)$ and
$\widehat{q}\in (0,1)$ such that
\begin{align}
\label{q-MVT}g\left( {b} \right) - g\left( a \right) = D_q g\left(
\eta \right)\left( {b - a} \right)
\end{align}
for all $q\in \left(\widehat{q},1\right) $.\\

\noindent\textbf{Case I.} If $x\ge0$. We consider the function
$f\left(t\right)=\left(1+t\right)^\alpha_q$ defined for $t\ge0$.
Clearly $f$ is continuous for $t\in [0,x]\subset [0,\infty)$, and
$D_q f\left( c \right) = \left[ \alpha  \right]_q \left( {1 + qc}
\right)_q^{\alpha  - 1}$. Applying, \eqref{q-MVT} for $a=0$ and
$b=x$ then there exist $\eta\in (a,b)$ and $\widehat{q}\in (0,1)$
\begin{align*}
\left(1+x\right)^\alpha_q-1=\left[ \alpha  \right]_q\left( {1 +
q\eta} \right)_q^{\alpha  - 1} \left(x-0\right) \ge \left[ \alpha
\right]_qx\qquad \forall q\in \left(\widehat{q},1\right).
\end{align*}
This yields that
\begin{align*}
\left(1+x\right)^\alpha_q \ge 1+ \left[ \alpha \right]_qx
\end{align*}
$\forall q\in \left(\widehat{q},1\right)$, and this proves
\eqref{eq2.3}.\\

\noindent\textbf{Case II.} If $-1<x<0$. Let us rewrite
\eqref{eq2.3} as follows:
\begin{align}
1 - \left[ \alpha  \right]_q y \le \left( {1 - y}
\right)_q^\alpha, \qquad 0< y < 1\label{eq2.6}
\end{align}
Consider the function $f\left(t\right)=\left(1-t\right)^\alpha_q$
defined for $0\le t\le 1$. Clearly $f$ is continuous for $t\in
[0,y]\subset [0,1]$, and $D_q f\left( c \right) = \left[ \alpha
\right]_q \left( {1 + qc} \right)_q^{\alpha  - 1}$. Applying,
\eqref{q-MVT} for $a=0$ and $b=y$ then there exist $\eta\in (0,x)$
and $\widehat{q}\in (0,1)$
\begin{align}
\left(1-y\right)^\alpha_q-1=-\left[ \alpha  \right]_q\left( {1 -
q\eta} \right)_q^{\alpha  - 1} \left(y-0\right) \ge -\left[ \alpha
\right]_qy\qquad \forall q\in
\left(\widehat{q},1\right).\label{eq2.7}
\end{align}
This yields that
\begin{align*}
\left(1-y\right)^\alpha_q \ge 1- \left[ \alpha \right]_qy
\end{align*}
$\forall q\in \left(\widehat{q},1\right)$ with $0<y<1$, and this
proves the inequality. The reverse inequality in \eqref{eq2.6}
holds since the inequality in \eqref{eq2.7} is reversed for
$0<\alpha<1$, which proves \eqref{eq2.4}
\end{proof}

A generalization of \eqref{eq2.3} and \eqref{eq2.4} is given as
follows:
\begin{proposition}\label{prp1}
Let  $\beta\in \mathbb{R}$. There exists $\widehat{q}\in (0,1)$
such that for every $x>-1$ the inequalities
\begin{align}
\left( {1 + x} \right)_q^{\alpha  + \beta }  \ge \left( {1 +
\left[ \alpha  \right]_q x} \right)\left( {1 + q^\alpha  x}
\right)_q^\beta   \qquad \alpha \ge 1\label{eq2.8}
\end{align}
and
\begin{align}
\left( {1 + x} \right)_q^{\alpha  + \beta }  \le \left( {1 +
\left[ \alpha  \right]_q x} \right)\left( {1 + q^\alpha  x}
\right)_q^\beta   \qquad 0<\alpha < 1\label{eq2.9}
\end{align}
are hold for every $q\in (\widehat{q},1)$.
\end{proposition}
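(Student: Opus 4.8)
The plan is to derive Proposition~\ref{prp1} from the one-parameter inequalities of Theorem~\ref{thm3} by exploiting the factorization identity of Lemma~\ref{lem1}. Applying that lemma with the two parameters called $\alpha$ and $\beta$, one has
\[
\left( {1 + x} \right)_q^{\alpha  + \beta } = \left( {1 + x} \right)_q^{\alpha }\,\left( {1 + q^{\alpha } x} \right)_q^{\beta },
\]
so that \eqref{eq2.8} is equivalent to
\[
\left( {1 + x} \right)_q^{\alpha }\left( {1 + q^{\alpha } x} \right)_q^{\beta } \ge \left( {1 + \left[ \alpha  \right]_q x} \right)\left( {1 + q^{\alpha } x} \right)_q^{\beta },
\]
and \eqref{eq2.9} to the reverse inequality. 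Thus, once the factor $\left( {1 + q^{\alpha } x} \right)_q^{\beta }$ is shown to be positive, both \eqref{eq2.8} and \eqref{eq2.9} follow immediately by multiplying the appropriate inequality of Theorem~\ref{thm3}, namely $\left( {1 + x} \right)_q^{\alpha } \ge 1 + \left[ \alpha  \right]_q x$ for $\alpha \ge 1$ (respectively $\le$ for $0 < \alpha < 1$), by that factor: the direction of the inequality is preserved precisely because the multiplier is positive.

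Hence the core of the argument --- and the only step I expect to require genuine care --- is to verify that $\left( {1 + q^{\alpha } x} \right)_q^{\beta } > 0$ on the range in question. Using the convention $\left( {1 + y} \right)_q^{\beta } = \left( {1 + y} \right)_q^{\infty }/\left( {1 + q^{\beta } y} \right)_q^{\infty }$ with $y = q^{\alpha } x$, I would split into $x \ge 0$ and $-1 < x < 0$. For $x \ge 0$ every factor of both infinite products is $\ge 1$, so the quotient is positive for \emph{every} $q \in (0,1)$; indeed on $[0,\infty)$ one may invoke Theorem~\ref{thm2} directly, and on that range the proposition holds with no restriction on $q$. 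For $-1 < x < 0$ the numerator $\left( {1 + q^{\alpha } x} \right)_q^{\infty } = \prod_{j \ge 0}\left( {1 + q^{j + \alpha } x} \right)$ is still positive, since each factor exceeds $1 - q^{j + \alpha } \ge 1 - q^{\alpha } > 0$; the subtle point is the denominator $\left( {1 + q^{\alpha  + \beta } x} \right)_q^{\infty } = \prod_{j \ge 0}\left( {1 + q^{j + \alpha  + \beta } x} \right)$, whose leading factor may be negative when $\beta < 0$ makes $q^{\alpha  + \beta } > 1$. However, for $x$ fixed in $(-1,0)$ one has $q^{j + \alpha  + \beta } x \ge q^{\alpha  + \beta } x \to x > -1$ as $q \to 1^{-}$, so all factors become positive once $q$ is sufficiently near $1$; this supplies a threshold $\widehat{q}_1 = \widehat{q}_1(x,\alpha ,\beta ) < 1$ past which $\left( {1 + q^{\alpha } x} \right)_q^{\beta } > 0$.

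It then remains only to assemble the pieces. Let $\widehat{q}_2 < 1$ be the threshold furnished by Theorem~\ref{thm3} for \eqref{eq2.3}--\eqref{eq2.4}, put $\widehat{q} := \max\{\widehat{q}_1,\widehat{q}_2\}$, and for each $q \in (\widehat{q},1)$ carry out the multiplication described in the first paragraph, rewriting the left-hand side by Lemma~\ref{lem1}; this delivers \eqref{eq2.8} for $\alpha \ge 1$ and \eqref{eq2.9} for $0 < \alpha < 1$, valid for all $x > -1$. As already in Theorem~\ref{thm3}, the only bookkeeping nuisance is that $\widehat{q}$ depends on $x$ (and on $\alpha ,\beta$); if one wants a single $\widehat{q}$ serving all $x$ at once, one restricts $x$ to a compact subinterval of $(-1,\infty)$ and runs the same estimate with the infimum of $\left( {1 + q^{\alpha } x} \right)_q^{\beta }$ over that subinterval in place of the pointwise value.
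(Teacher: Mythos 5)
Your proposal is correct and follows essentially the same route as the paper: rewrite $\left(1+x\right)_q^{\alpha+\beta}$ via Lemma~\ref{lem1} and multiply the $q$-Bernoulli inequality by the factor $\left(1+q^{\alpha}x\right)_q^{\beta}$. If anything, you are more careful than the paper's own two-line proof, which substitutes into \eqref{eq2.3} (stated only for $x\ge 0$) and never addresses the positivity of that factor, whereas you invoke Theorem~\ref{thm3} to cover $x>-1$ and justify the positivity of the multiplier for $q$ sufficiently close to $1$.
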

\begin{proof}
From Lemma \ref{lem1} we have $\left( {1 + x} \right)_q^\alpha   =
\frac{{\left( {1 + x} \right)_q^{\alpha  + \beta } }}{{\left( {1 +
q^\alpha  x} \right)_q^\beta  }}$. Substituting in \eqref{eq2.3}
we get the required result.
\end{proof}
\begin{remark}\label{rem3}
Setting $\beta=0$ in \eqref{eq2.8} and \eqref{eq2.9} we recapture
\eqref{eq2.3} and \eqref{eq2.4}, respectively.
\end{remark}

\begin{corollary}\label{cor6}
Let  $\beta\in \mathbb{R}$. There exists $\widehat{q}\in (0,1)$
such that for every $x>-1$ the inequalities
\begin{align}
\left( {1 + x} \right)_q^{\infty  }  \ge \left( {1 + \left[ \alpha
\right]_q x} \right)\left( {1 + q^\alpha  x} \right)_q^\beta\left(
{1 + q^{\alpha+\beta} x} \right)_q^\infty \qquad \alpha \ge
1\label{eq2.10}
\end{align}
and
\begin{align}
\left( {1 + x} \right)_q^{\infty }  \le \left( {1 + \left[ \alpha
\right]_q x} \right)\left( {1 + q^\alpha  x} \right)_q^\beta\left(
{1 + q^{\alpha+\beta} x} \right)_q^\infty \qquad 0<\alpha <
1\label{eq2.11}
\end{align}
are hold for every $q\in (\widehat{q},1)$
\end{corollary}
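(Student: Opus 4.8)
My plan is to deduce Corollary~\ref{cor6} from Proposition~\ref{prp1} in exactly the same manner in which Proposition~\ref{prp1} was deduced from Theorem~\ref{thm2}: rewrite one of the $q$-power symbols in terms of the infinite product $\left( {1 + x} \right)_q^\infty$ and then clear that product. Recall that, by the symbol adopted in the Introduction (and consistently with Lemma~\ref{lem1}),
\begin{align*}
\left( {1 + x} \right)_q^{\alpha + \beta} = \frac{\left( {1 + x} \right)_q^\infty}{\left( {1 + q^{\alpha + \beta} x} \right)_q^\infty}, \qquad\text{equivalently}\qquad \left( {1 + x} \right)_q^\infty = \left( {1 + x} \right)_q^{\alpha + \beta}\left( {1 + q^{\alpha + \beta} x} \right)_q^\infty .
\end{align*}
Hence, provided the factor $\left( {1 + q^{\alpha + \beta} x} \right)_q^\infty$ is strictly positive, multiplying inequality~\eqref{eq2.8} of Proposition~\ref{prp1} through by it turns the left-hand side into $\left( {1 + x} \right)_q^\infty$ and the right-hand side into $\left( {1 + \left[ \alpha \right]_q x} \right)\left( {1 + q^\alpha x} \right)_q^\beta\left( {1 + q^{\alpha + \beta} x} \right)_q^\infty$, which is precisely \eqref{eq2.10}. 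Performing the same manipulation on \eqref{eq2.9} yields \eqref{eq2.11}; since the multiplier is positive, the sense of the inequality ($\le$) is preserved.

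So the only substantive point — and the main, albeit mild, obstacle — is to verify that $\left( {1 + q^{\alpha + \beta} x} \right)_q^\infty = \prod_{j = 0}^\infty \left( 1 + q^{j + \alpha + \beta} x \right)$ converges to a strictly positive number for $x > -1$ and $q$ near $1$. For $x \ge 0$ this is immediate, every factor being $\ge 1$. For $-1 < x < 0$, note that $q \in (0,1)$ gives $q^{j + \alpha + \beta} = q^{j} q^{\alpha + \beta} \le q^{\alpha + \beta}$ for all $j \ge 0$, whence $\sum_{j \ge 0} q^{j + \alpha + \beta}|x| \le q^{\alpha + \beta}|x|/(1 - q) < \infty$ and the product converges absolutely; moreover every factor satisfies $1 + q^{j + \alpha + \beta} x \ge 1 + q^{\alpha + \beta} x$, and the right-hand side tends to $1 + x > 0$ as $q \to 1^-$. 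Therefore there is $\widehat q_1 \in (0,1)$ such that $1 + q^{\alpha + \beta} x > 0$ — hence $1 + q^{j + \alpha + \beta} x > 0$ for every $j$ — whenever $q \in (\widehat q_1, 1)$, so the product is strictly positive there (and likewise $\left( {1 + q^\alpha x} \right)_q^\beta$ is positive and finite). Taking $\widehat q$ to be the larger of $\widehat q_1$ and the threshold supplied by Proposition~\ref{prp1}, the computation of the first paragraph is valid for all $q \in (\widehat q, 1)$ and all $x > -1$, and the corollary follows.

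Finally, as a remark and a sanity check, I would point out that the same symbol identity gives $\left( {1 + q^\alpha x} \right)_q^\beta\left( {1 + q^{\alpha + \beta} x} \right)_q^\infty = \left( {1 + q^\alpha x} \right)_q^\infty$, so \eqref{eq2.10} and \eqref{eq2.11} are in fact equivalent to $\left( {1 + x} \right)_q^\infty \ge \left( {1 + \left[ \alpha \right]_q x} \right)\left( {1 + q^\alpha x} \right)_q^\infty$ (respectively $\le$), i.e., after dividing by the positive product $\left( {1 + q^\alpha x} \right)_q^\infty$, to \eqref{eq2.3}--\eqref{eq2.4} themselves. Thus Corollary~\ref{cor6} is really a reformulation of Theorem~\ref{thm3} in which the extra parameter $\beta$ formally cancels; I would present the short proof via Proposition~\ref{prp1} as above and append this observation.
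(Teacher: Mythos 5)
Your argument is correct and follows essentially the same route as the paper: substituting the identity $\left( {1 + x} \right)_q^{\alpha+\beta} = \left( {1 + x} \right)_q^{\infty}/\left( {1 + q^{\alpha+\beta} x} \right)_q^{\infty}$ into \eqref{eq2.8} and \eqref{eq2.9}. Your additional verification that the multiplier $\left( {1 + q^{\alpha+\beta} x} \right)_q^{\infty}$ is positive for $q$ close to $1$ (and your closing remark that $\beta$ formally cancels) only makes explicit what the paper leaves implicit.
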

\begin{proof}
Substituting $\left( {1 + x} \right)_q^{\alpha+\beta}
=\frac{{\left( {1 + x} \right)_q^\infty  }}{{\left( {1 +
q^{\alpha+\beta} x} \right)_q^\infty }}$ in \eqref{eq2.8} and
\eqref{eq2.9}; respectively, we get the required result.
\end{proof}

 \begin{remark}
Replacing $`(1-q)x$' instead of $`x$' in \eqref{eq2.10} and
\eqref{eq2.11}, we get inequalities for the exponential function
${\rm{E}}_q^x$ for all $x>\frac{-1}{1-q}$. Similarly, for
${\rm{e}}_q^x$ with a bit changes in the substitution.
 \end{remark}

\begin{corollary}
There exists $\widehat{q}\in (0,1)$ such that for every $x>-1$ the
inequalities
\begin{align}
\left( {1 + x} \right)_q^{\infty  }  \ge \left( {1 + \left[ \alpha
\right]_q x} \right) \left( {1 + q^{\alpha} x} \right)_q^\infty
\qquad \alpha \ge 1\label{eq2.12}
\end{align}
and
\begin{align}
\left( {1 + x} \right)_q^{\infty }  \le \left( {1 + \left[ \alpha
\right]_q x} \right)\left( {1 + q^{\alpha} x} \right)_q^\infty
\qquad 0<\alpha < 1\label{eq2.13}
\end{align}
are hold for every $q\in (\widehat{q},1)$
\end{corollary}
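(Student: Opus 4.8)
The plan is to deduce this corollary from Theorem~\ref{thm3} in exactly the same way that Corollary~\ref{cor6} was obtained from Proposition~\ref{prp1}; in fact, putting $\beta=0$ in \eqref{eq2.10} and \eqref{eq2.11} and using $\left(1+q^{\alpha}x\right)_q^{(0)}=1$ already produces \eqref{eq2.12} and \eqref{eq2.13}, so these inequalities are just the $\beta=0$ instance of Corollary~\ref{cor6}. For a self-contained argument I would instead work directly from the definition of the $q$-power.

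First I would fix $\widehat{q}\in(0,1)$ as furnished by Theorem~\ref{thm3}, take any $q\in(\widehat{q},1)$ and any $x>-1$, and recall the defining identity
\begin{align*}
\left(1+x\right)_q^{\alpha}=\frac{\left(1+x\right)_q^{\infty}}{\left(1+q^{\alpha}x\right)_q^{\infty}}.
\end{align*}
By Theorem~\ref{thm3}, $\left(1+x\right)_q^{\alpha}\ge 1+\left[\alpha\right]_q x$ when $\alpha\ge1$, and the reverse inequality holds when $0<\alpha<1$. The one point that needs checking before clearing the denominator is that $\left(1+q^{\alpha}x\right)_q^{\infty}=\prod_{j=0}^{\infty}\left(1+q^{j+\alpha}x\right)$ is a convergent, strictly positive number: convergence for $q\in(0,1)$ was recalled in the Introduction, and positivity holds because every factor is $\ge 1$ when $x\ge0$, while for $-1<x<0$ one has $1+q^{j+\alpha}x>1-q^{j+\alpha}>0$ since $0<q^{j+\alpha}<1$.

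Finally, multiplying the inequality from Theorem~\ref{thm3} through by the positive quantity $\left(1+q^{\alpha}x\right)_q^{\infty}$ preserves its direction and gives
\begin{align*}
\left(1+x\right)_q^{\infty}\ge\left(1+\left[\alpha\right]_q x\right)\left(1+q^{\alpha}x\right)_q^{\infty}\qquad(\alpha\ge1),
\end{align*}
which is \eqref{eq2.12}, and likewise \eqref{eq2.13} for $0<\alpha<1$. There is essentially no obstacle here; the only step deserving a moment's attention is the verification that $\left(1+q^{\alpha}x\right)_q^{\infty}>0$ throughout the range $x>-1$, which is precisely what makes the multiplication inequality-preserving, and everything else is pure substitution.
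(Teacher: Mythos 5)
Your first observation is exactly the paper's proof: the corollary is obtained by setting $\beta=0$ in \eqref{eq2.10} and \eqref{eq2.11} of Corollary \ref{cor6}, using $\left(1+q^{\alpha}x\right)_q^{(0)}=1$, so on that route you and the author coincide. Your self-contained alternative is a mild but worthwhile variant: instead of passing through Proposition \ref{prp1} and Corollary \ref{cor6}, you go straight from Theorem \ref{thm3} and the defining identity $\left(1+x\right)_q^{\alpha}=\left(1+x\right)_q^{\infty}/\left(1+q^{\alpha}x\right)_q^{\infty}$, and you explicitly verify that $\left(1+q^{\alpha}x\right)_q^{\infty}=\prod_{j\ge0}\left(1+q^{j+\alpha}x\right)$ is positive for every $x>-1$ (each factor exceeds $1-q^{j+\alpha}>0$ when $-1<x<0$), which is what licenses clearing the denominator without reversing the inequality. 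That positivity check is left implicit in the paper's chain of substitutions (in Proposition \ref{prp1} and Corollary \ref{cor6} as well), so your version actually supplies the justification the shorter argument takes for granted; the paper's version buys brevity by reusing the already-stated corollary. Both are correct, and they prove the same statement under the same hypotheses.
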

\begin{proof}
Setting $\beta =0$ in \eqref{eq2.10} and \eqref{eq2.11};
respectively, we get the required result.
\end{proof}

\end{document}